\newtheorem{Theo}[subsubsection]{Theorem}
\newtheorem{Theor}{Theorem}
\newtheorem{Theore}{Theorem}
\newtheorem{cor}[subsubsection]{Corollary}
\theoremstyle{definition}
\newtheorem{rem}[subsubsection]{Remark}
\newtheorem{question}[subsection]{Question}
\newtheorem{Prop}[subsubsection]{Proposition}
\newtheorem{Obs}[subsubsection]{Observation}
\newtheorem{Lemm}[subsubsection]{Lemma}
\newtheorem{ex}[subsection]{Exercise}
\DeclareSymbolFont{largesymbols}{OMX}{yhex}{m}{n}
\DeclareMathAccent{\widetilde}{\mathord}{largesymbols}{"65}
\def\msquare{\mathord{\scalerel*{\Box}{gX}}}
\newcommand{\bp}{\begin{Prop}}
\newcommand{\ep}{\end{Prop}}
\newcommand{\bl}{\begin{Lemm}}
\newcommand{\el}{\end{Lemm}}
\newcommand{\bex}{\begin{ex} \rm}
\newcommand{\eex}{\end{ex}}
\newcommand{\bt}{\begin{Theo}}
\newcommand{\et}{\end{Theo}}
\newcommand{\bq}{\begin{question}}
\newcommand{\eq}{\end{question}}
\newcommand{\bc}{\begin{cor}}
\newcommand{\ec}{\end{cor}}
\newcommand{\bob}{\begin{Obs}}
\newcommand{\eob}{\end{Obs}}
\newcommand{\nc}{\newcommand}
\nc{\renc}{\renewcommand}
\nc{\ssec}{\subsection}
\nc{\sssec}{\subsubsection} 
\nc\ol{\overline}
\nc\wt{\widetilde}
\nc\wh{\widehat}
\nc\tboxtimes{\wt{\boxtimes}}
\renc{\d}{{\delta}}
\nc{\Aa}{{\mathbb{A}}}
\nc{\Bb}{{\mathbb{B}}}
 \nc{\Gg}{{\mathbb{G}}}  
\nc{\Hh}{{\mathbb{H}}}
 \nc{\Nn}{{\mathbb{N}}}
\nc{\Pp}{{\mathbb{P}}}
\nc{\Rr}{{\mathbb{R}}}
\newcommand{\F}{\mathbb{F}}
\nc{\BV}{{\mathbb{V}}}
\nc{\BW}{{\mathbb{W}}}
\newcommand{\Z}{\mathbb{Z}}
\newcommand{\N}{\mathbb{N}}
\nc{\Qq}{{\mathbb{Q}}}
\nc{\Ss}{{\mathbb{S}}}
\nc{\Cc}{{\mathbb{C}}}
\nc{\Ff}{{\mathbb{F}}}
 \nc{\EL}{{L_\infty}}
\nc{\CA}{{\mathcal{A}}}
\nc{\CB}{{\mathcal{B}}}
\nc{\CE}{{\mathcal{E}}}
\nc{\CF}{{\mathcal{F}}}
\nc{\Las}{\mathsf{Las}}
\nc{\CG}{{\mathcal{G}}}
\nc{\CL}{{\mathcal{L}}}
\nc{\CC}{{\mathcal{C}}}
\nc{\CM}{{\mathcal{M}}}
\nc{\CN}{{\mathcal{N}}}
\nc{\Oog}{{\mathbb{O}}}
\nc{\Oo}{{\mathcal{O}}}
\nc{\CP}{{\mathcal{P}}}
\nc{\CQ}{{\mathcal{Q}}}
\nc{\CR}{{\mathcal{R}}}
\nc{\CS}{{\mathcal{S}}}
\nc{\CT}{{\mathcal{T}}}
\nc{\CU}{{\mathcal{P}}}
\nc{\CV}{{\mathcal{V}}}
\nc{\CW}{{\mathcal{W}}}
\nc{\CZ}{{\mathcal{Z}}}
\nc{\cM}{{\check{\mathcal M}}{}}
\nc{\csM}{{\check{\mathcal A}}{}}
\nc{\oM}{{\overset{\circ}{\mathcal M}}{}}
\nc{\obM}{{\overset{\circ}{\mathbf M}}{}}
\nc{\oCA}{{\overset{\circ}{\mathcal A}}{}}
\nc{\obA}{{\overset{\circ}{\mathbf A}}{}}
\nc{\ooM}{{\overset{\circ}{M}}{}}
\nc{\osM}{{\overset{\circ}{\mathsf M}}{}}
\nc{\vM}{{\overset{\bullet}{\mathcal M}}{}}
\nc{\nM}{{\underset{\bullet}{\mathcal M}}{}}
\nc{\oD}{{\overset{\circ}{\mathcal D}}{}}
\nc{\obD}{{\overset{\circ}{\mathbf D}}{}}
\nc{\oA}{{\overset{\circ}{\mathbb A}}{}}
\nc{\op}{{\overset{\bullet}{\mathbf p}}{}}
\nc{\cp}{{\overset{\circ}{\mathbf p}}{}}
\nc{\oU}{{\overset{\bullet}{\mathcal U}}{}}
\nc{\oZ}{{\overset{\circ}{\mathcal Z}}{}}
\nc{\ofZ}{{\overset{\circ}{\mathfrak Z}}{}}
\nc{\oF}{{\overset{\circ}{\fF}}}
\nc{\fa}{{\mathfrak{a}}}
\nc{\fb}{{\mathfrak{b}}}
\nc{\fg}{{\mathfrak{g}}}
\nc{\fgt}{{\fg}_!}
\nc{\fgl}{{\mathfrak{gl}}}
\nc{\fh}{{\mathfrak{h}}}
\nc{\fj}{{\mathfrak{j}}}
\nc{\fm}{{\mathfrak{m}}}
\nc{\ft}{{\mathfrak{t}}}
\nc{\fn}{{\mathfrak{n}}}
\nc{\fu}{{\mathfrak{u}}}
\nc{\fp}{{\mathfrak{p}}}
\nc{\fr}{{\mathfrak{r}}}
\nc{\fs}{{\mathfrak{s}}}
\nc{\fsl}{{\mathfrak{sl}}}
\nc{\hsl}{{\widehat{\mathfrak{sl}}}}
\nc{\hgl}{{\widehat{\mathfrak{gl}}}}
\nc{\hg}{{\widehat{\mathfrak{g}}}}
\nc{\chg}{{\widehat{\mathfrak{g}}}{}^\vee}
\nc{\hn}{{\widehat{\mathfrak{n}}}}
\nc{\chn}{{\widehat{\mathfrak{n}}}{}^\vee}
\nc{\fA}{{\mathfrak{A}}}
\nc{\fB}{{\mathfrak{B}}}
\nc{\fD}{{\mathfrak{D}}}
\nc{\fE}{{\mathfrak{E}}}
\nc{\fF}{{\mathfrak{F}}}
\nc{\fG}{{\mathfrak{G}}}
\nc{\fK}{{\mathfrak{K}}}
\nc{\fL}{{\mathfrak{L}}}
\nc{\fM}{{\mathfrak{M}}}
\nc{\fN}{{\mathfrak{N}}}
\nc{\fP}{{\mathfrak{P}}}
\nc{\fU}{{\mathfrak{U}}}
\nc{\fV}{{\mathfrak{V}}}
\nc{\fZ}{{\mathfrak{Z}}}
\newcommand{\Q}{\mathbb{Q}}
\nc{\bb}{{\mathbf{b}}}
\nc{\bd}{\partial}
\nc{\be}{{\mathbf{e}}}
\nc{\bj}{{\mathbf{j}}}
\nc{\bn}{{\mathbf{n}}}
\nc{\bF}{{\mathbf{F}}}
\nc{\bu}{{\mathbf{u}}}
\nc{\bv}{{\mathbf{v}}}
\nc{\bx}{{\mathbf{x}}}
\nc{\bs}{{\mathbf{s}}}
\nc{\by}{{\bar{y}}}
\nc{\bw}{{\mathbf{w}}}
\nc{\bA}{{\mathbf{A}}}
\nc{\bK}{{\mathbf{K}}}
\nc{\bI}{{\mathbf{I}}}
\nc{\bB}{{\mathbf{B}}}
\nc{\bG}{{\mathbf{G}}}
\nc{\bD}{{\mathbf{D}}}
\nc{\bP}{{\mathbf{P}}}
\nc{\bH}{{\mathbf{H}}}
\nc{\bM}{{\mathbf{M}}}
\nc{\bN}{{\mathbf{N}}}
\nc{\bV}{{\mathbf{V}}}
\nc{\bU}{{\mathbf{U}}}
\nc{\bL}{{\mathbf{L}}}
\nc{\bW}{{\mathbf{W}}}
\nc{\bX}{{\mathbf{X}}}
\nc{\bY}{{\mathbf{Y}}}
\nc{\bZ}{{\mathbf{Z}}}
\nc{\bS}{{\mathbf{S}}}
\nc{\bSi}{{\bar{\Sigma}}}
\nc{\sA}{{\mathsf{A}}}
\nc{\sB}{{\mathsf{B}}}
\nc{\sC}{{\mathsf{C}}}
\nc{\sD}{{\mathsf{D}}}
\nc{\sF}{{\mathsf{F}}}
\nc{\sG}{{\mathsf{G}}}
\nc{\sK}{{\mathsf{K}}}
\nc{\sM}{{\mathsf{M}}}
\nc{\sO}{{\mathsf{O}}}
\nc{\sQ}{{\mathsf{Q}}}
\nc{\sP}{{\mathsf{P}}}
\nc{\sZ}{{\mathsf{Z}}}
\nc{\sfp}{{\mathsf{p}}}
\nc{\sr}{{\mathsf{r}}}
\nc{\sg}{{\mathsf{g}}}
\nc{\sff}{{\mathsf{f}}}
\nc{\sfb}{{\mathsf{b}}}
\nc{\sfc}{{\mathsf{c}}}
\nc{\sd}{{\ltimes}} 
\nc{\tH}{{\widetilde{H}}}
\nc{\tA}{{\widetilde{\mathbf{A}}}}
\nc{\tB}{{\widetilde{\mathcal{B}}}}
\nc{\tg}{{\widetilde{\mathfrak{g}}}}
\nc{\tG}{{\widetilde{G}}}
\nc{\TM}{{\widetilde{\mathbb{M}}}{}}
\nc{\tO}{{\widetilde{\mathsf{O}}}{}}
\nc{\tU}{\widetilde{U}}
\nc{\TZ}{{\tilde{Z}}}
\nc{\tx}{{\tilde{x}}}
\nc{\tq}{{\tilde{q}}}
\nc{\tfP}{{\widetilde{\mathfrak{P}}}{}}
\nc{\tz}{{\tilde{\zeta}}}
\nc{\tmu}{{\tilde{\mu}}}
  \nc{\vol}{{\mathop{\operatorname{\rm vol\,}}}}
  \nc{\gal}{{\mathop{\operatorname{\rm Gal\,}}}}
  \nc{\cl}{{\mathop{\operatorname{\rm cl}}}}
  \nc{\disc}{{\mathop{\operatorname{\rm disc}}}}
  \nc{\Sym}{{\mathop{\operatorname{\rm Sym}}}}
   \nc{\Aut}{{\mathop{\operatorname{\rm Aut}}}}
 \nc{\Spec}{{\mathop{\operatorname{\rm Spec}}}}
  \nc{\spec}{{\mathop{\operatorname{\rm Spec}}}}
\nc{\Ker}{{\mathop{\operatorname{\rm Ker}}}}
 \nc{\dom}{{\mathop{\operatorname{\rm dom}}}}
\nc{\End}{{\mathop{\operatorname{\rm End}}}}
 \nc{\Hom}{\operatorname{\Hom}}
 \nc{\GL}{{\mathop{\operatorname{\rm GL}}}}
 \nc{\Id}{{\mathop{\operatorname{\rm Id}}}}
 \nc{\rk}{{\mathop{\operatorname{\rm rk}}}}
 \nc{\length}{{\mathop{\operatorname{\rm length}}}}
\nc{\supp}{{\mathop{\operatorname{\rm supp} \, }}}
\nc{\val}{{\rm val}}
\nc{\res}{{\mathop{\operatorname{\rm res}}}}
\def\Ind#1#2#3{{#1} {\downarrow}_{#3} {#2} }
\nc{\seq}[1]{\stackrel{#1}{\sim}}
\def\beq#1{\begin{equation} \label{ #1}}
\def\eeq{\end{equation}}
\def\prf{\begin{proof}}
\def\pv{\end{proof} }
 \def\eprf{\end{proof} }
 \renc{\b}{{\beta}}
\def\Ind#1#2{#1\setbox0=\hbox{$#1x$}\kern\wd0\hbox to 0pt{\hss$#1\mid$\hss}
\lower.9\ht0\hbox to 0pt{\hss$#1\smile$\hss}\kern\wd0}
\title{Diophantine problems over $\Z^{ab}$ modulo prime numbers}
\author{Konstantinos Kartas }
\newcommand{\Addresses}{{% additional braces for segregating \footnotesize
  \bigskip
  \footnotesize

\textsc{Mathematical Institute, Woodstock Road, Oxford OX2 6GG.}\par\nopagebreak
  \textit{E-mail address}: \texttt{kartas@maths.ox.ac.uk}
}}
\begin{document}

%\footnote{June 2019}

\maketitle
\begin{abstract}
Let $\Z^{ab}$ be the ring of integers of $\Q^{ab}$, the maximal abelian extension of $\Q$. We show that there exists an algorithm to decide whether a system of equations and inequations, with integer coefficients, has a solution in $\Z^{ab}$ modulo every rational prime. 
\end{abstract}
\setcounter{tocdepth}{1}
\tableofcontents

\section*{Introduction} 
In \cite{AxFin}, Ax has exhibited an algorithm for deciding whether a given system of polynomial equations (and inequations) over $\Z$ has a solution modulo $p$, for all prime numbers $p\in \mathbb{P}$. Phrased differently, the existential theory of all prime fields $\{\Z/p\Z:p\in \mathbb{P}\}$ in the language of rings $L_r$ is decidable. One year later, Ax \cite{Ax} generalized this by showing that the full first-order theory of $\{\Z/p\Z:p\in \mathbb{P}\}$ is decidable. This was perhaps in sharp contrast with the fact that no algorithm exists for determining whether a system of polynomial equations has a solution in $\Z$ and that the corresponding problem for $\Q$, i.e. Hilbert's tenth problem over $\Q$, is widely open. \\  

In this paper, we study an analogous problem for $\Z^{ab}$, where $\Z^{ab}$ is the ring of integers of $\Q^{ab}$, the maximal abelian extension of $\Q$. It is not known whether the existential theory of $\Z^{ab}$ is decidable in $L_r$. For some comments on the \textit{conjectural} undecidability of $\Z^{ab}$, see the last bullet point on Section 6.3, pg.191 in \cite{JK}. \\
Our main goal is to prove the following theorem:
\begin{Theor} \label{main}
The existential theory of $\{\Z^{ab}/p\Z^{ab}:p\in \mathbb{P}\}$ is decidable in $L_r$.
\end{Theor}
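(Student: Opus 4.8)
My plan is to strip $R_p:=\Z^{ab}/p\Z^{ab}$ down to an explicit family of finite rings, push to a limit object in characteristic $p$, and then run an Ax-style uniformity argument. An arbitrary existential $L_r$-sentence is a finite disjunction of ``primitive'' ones $\exists\bar X\,(\bigwedge_i f_i=0\wedge\bigwedge_{j\le s}g_j\ne0)$, and I treat such a primitive sentence first, deferring the disjunction to the end. By Kronecker--Weber $\Z^{ab}=\bigcup_N\Z[\mu_N]$, so $R_p=\varinjlim_N\Z[\mu_N]/p\Z[\mu_N]$ is a directed union of finite rings along injective maps; hence a primitive existential sentence holds in $R_p$ iff it holds in some $\Z[\mu_N]/p\Z[\mu_N]$. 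Writing $N=p^am$ with $p\nmid m$, and using that $p$ has residue degree $f=\mathrm{ord}_m(p)$ in $\Q(\mu_m)$ and is totally ramified of degree $\phi(p^a)$ in $\Q(\mu_{p^a})$, one gets a canonical isomorphism $\Z[\mu_N]/p\Z[\mu_N]\cong\big(\F_{p^f}[t]/(t^{\phi(p^a)})\big)^{\phi(m)/f}$, with $t$ the image of $\zeta_{p^a}-1$. In a finite product, polynomial equations must hold in every coordinate while a non-equation need only hold in one; since the number of coordinates $\phi(m)/f$ can be made arbitrarily large (enlarge $f$, using that existential formulas persist along $\F_{p^f}\hookrightarrow\F_{p^{f'}}$ for $f\mid f'$), this reduces the problem to deciding whether the single-factor existential sentence $\Psi:\;\bigwedge_{j\le s}\exists\bar X^{(j)}\big(\bigwedge_i f_i(\bar X^{(j)})=0\wedge g_j(\bar X^{(j)})\ne0\big)$ holds in $\F_{p^f}[t]/(t^{\phi(p^a)})$ for some $f$ and $a$.

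Next I would pass to the limit. The transition maps of this family --- coefficient extension $\F_{p^f}\hookrightarrow\F_{p^{f'}}$ for $f\mid f'$, and $t\mapsto t^p$ coming from $\zeta_{p^a}\mapsto\zeta_{p^{a+1}}^p$ --- are injective ring homomorphisms, so $\Psi$ holds in some member of the family iff it holds in $\Lambda_p:=\varinjlim_{f,a}\F_{p^f}[t]/(t^{\phi(p^a)})$, an explicit $\overline{\F}_p$-algebra, namely $\overline{\F}_p\otimes_{\F_p}\big(\F_p[t_1,t_2,\dots]/(t_1^{\,p-1},\,t_2^{\,p}-t_1,\,t_3^{\,p}-t_2,\dots)\big)$; equivalently $\Lambda_p\cong\overline{\F}_p[s^{1/p^\infty}]/(s^{\,p-1})$, the truncation below valuation $p-1$ of the perfect valuation ring $\overline{\F}_p[s^{1/p^\infty}]$. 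Thus the decision problem becomes: given an existential $L_r$-sentence $\Psi$, decide whether $\Lambda_p\models\Psi$ for every prime $p$. Structurally this is the expected picture: localizing $R_p$ at a prime $v\mid p$, the completion $\widehat{\Q^{ab}_v}$ is a perfectoid field and $\widehat{\mathcal O_v}/p$ is canonically the truncation $\mathcal O^\flat/\varpi^\flat$ of its tilt --- a perfect characteristic-$p$ valuation ring with residue field $\overline{\F}_p$ --- which is exactly $\Lambda_p$; and $R_p$ embeds into the product of these copies over the profinite set $\{v\mid p\}$, with its ``locally constant sections'' structure recovering the reductions above. (Concretely, a solution of $f_i=0$ in $\F_{p^f}[t]/(t^e)$ is an $\F_{p^f}$-point of the $e$-th jet scheme of $\{f_i=0\}$, and $\Lambda_p$ records the behaviour of these jet/arc schemes over $\overline{\F}_p$ as the level grows.)

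The heart is uniformity in $p$. The rings $\Lambda_p$ are ``the same structure'' apart from characteristic, so an Ax-type argument should apply: a non-principal ultraproduct $\prod_p\Lambda_p/\mathcal U$ is a characteristic-$0$ ring assembled from an algebraically closed residue field and a nonstandardly-nilpotent ideal, essentially a truncated algebraically closed valued object, whose existential theory is decidable (ACVF-adjacent model theory, or the model theory of arc spaces and of equicharacteristic henselian valued rings); and dually, for each $\Psi$ one expects the set $S_\Psi=\{p:\Lambda_p\models\Psi\}$ to be an effectively presentable Frobenian set of primes --- governed, for all but finitely many $p$, by the geometry of the relevant jet schemes over a number field together with splitting/Frobenius data, in the manner of Chebotarev --- with the finitely many exceptional $p$ handled directly, using that $\Lambda_p$ is a limit of finite rings together with Greenberg-type effective bounds on the level at which a solution can first appear. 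Finally, a general existential sentence is a finite disjunction $\bigvee_i\exists\bar X\,\theta_i$, so one must decide whether $\bigcup_iS_{\Psi_i}=\mathbb{P}$: passing to a common finite Galois extension of $\Q$, this becomes the question of whether a union of conjugacy-stable subsets of a computable finite Galois group is the whole group (plus a finite check at the exceptional primes), which effective Chebotarev settles.

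The main obstacle is precisely this last layer: making the ``level $\to\infty$'' limit effective and uniform in $p$ at the same time --- i.e.\ proving that $S_\Psi$ is genuinely Frobenian outside an \emph{effectively bounded} set of primes, with explicit Greenberg-type level bounds at those bad primes, and that the resulting description is computable finely enough to settle $\bigcup_iS_{\Psi_i}=\mathbb{P}$. The first two layers are essentially bookkeeping (Kronecker--Weber plus the elementary structure of cyclotomic rings mod $p$, or the perfectoid tilting dictionary); the real content, and the reliance on heavy machinery --- the model theory of arc spaces and of henselian valued fields, or perfectoid tilting together with transfer of decidability, and effective Chebotarev --- lives in the uniform characteristic-$p$ decidability step.
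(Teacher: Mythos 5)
Your first two layers reproduce, in different packaging, what the paper does in Section 1: Kronecker--Weber plus the Chinese Remainder Theorem for the prime-to-$p$ part, and the computation of the totally ramified part, reduce a primitive existential sentence about $\Z^{ab}/p\Z^{ab}$ to the modified sentence (one witness per inequation, equations shared) about $\bar\F_p[t^{1/p^{\infty}}]/t^{p-1}$ --- this is exactly Corollary~\ref{corconv}, and your $\Lambda_p$ is that ring. The genuine gap is your third layer, and you have located it yourself: nothing in the proposal actually decides, for fixed $p$ let alone uniformly in $p$, whether $\Lambda_p\models\Psi$. The missing idea in the paper is Proposition~\ref{localglobal}: solvability of $\bigwedge_i (f_i=0\land g_i\neq 0)$ in $\bar\F_p[t^{1/p^{\infty}}]/t^{p-1}$ is equivalent to $\exists x\in\mathcal{O}_v\,\bigwedge_{i,j}\, v(f_i(x))>v(g_j(x))$ in $\bar\F_p((t))^{1/p^{\infty}}$, a condition in which the truncation level $p-1$ no longer appears. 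The nontrivial direction uses that $\frac{1}{p^{\infty}}\Z$ is dense in $\Rr$ and that $t\mapsto t^q$ extends to an $\bar\F_p$-embedding of $\bar\F_p((t))^{1/p^{\infty}}$ into itself, so a witness with an arbitrary valuation gap can be rescaled until the gap contains $p-1$. Combined with the observation that every finite extension of $\bar\F_p((t))$ is isomorphic to it, whence $Th_{\exists}(\bar\F_p((t))^{1/p^{\infty}},v_t)=ACVF_{(p,p),\exists}$, this produces a single $L_{val}$-sentence $\phi^*$, independent of $p$, with $\Z^{ab}/p\Z^{ab}\models\phi\iff ACVF_{(p,p)}\models\phi^*$. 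This is precisely the ``effective, uniform in $p$ level bound'' you name as the main obstacle; without it, or a proved substitute for the Greenberg-type bounds and Frobenian description you merely postulate, the proposal is not a proof.

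Once $\phi^*$ is in hand, uniformity over $p$ is the routine completeness/compactness argument (test $ACVF_{(0,0)}\models\phi^*$, extract a computable $p_0$ from a proof, check the finitely many $p\leq p_0$ using Robinson's decidability of each $ACVF_{(p,p)}$), and disjunctions pass through verbatim since the translation commutes with them. In particular the Chebotarev/Frobenian layer you anticipate is not only unproven but misdirected: because all the residue data here are built from $\bar\F_p$, which is algebraically closed, the set $\{p\in\mathbb{P}:\Z^{ab}/p\Z^{ab}\models\phi\}$ is always finite or cofinite, so no splitting conditions survive --- unlike Ax's original problem over $\Z/p\Z$, where genuinely Frobenian sets such as $\{p\equiv 1 \bmod 4\}$ do occur. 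Effective Chebotarev, jet-scheme geometry, ultraproduct structure theory and perfectoid tilting can all be dispensed with; the entire weight of the argument rests on the valuation-theoretic rescaling step that your plan leaves open.
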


%Theorem \ref{main} is again to be contrasted with $H10/\Z^{ab}$. %\footnote{This abbreviation is introduced on pg.21 in \cite{JK}:\\
%$H10^+/\Z^{ab}$: Hilbert's tenth problem over $\Z^{ab}$ with coefficients in $\Z^{ab}$.\\ 
%$H10/\Z^{ab}$: Hilbert's tenth problem over $\Z^{ab}$ with coefficients in $\Z$.\\ 
%Note however that these are equivalent.}
%For some comments on the \textit{conjectural} (existential) undecidability of $\Z^{ab}$, see the last bullet point on Section 5.3, pg.40 in \cite{JK}.\\
An important difference with the existential theory of $\{\Z/p\Z:p\in \mathbb{P}\}$, is that $\Z^{ab}/p\Z^{ab}$ is not a field; in fact it is not even a local ring but merely a direct limit of semi-local rings. In particular, one cannot simply eliminate inequations $\exists x (g(x)\neq 0)$ by replacing them with $\exists x,y(yg(x)-1=0)$. Similarly, one cannot replace $\exists x(g_1(x)\neq 0\land ... \land g_n(x)\neq 0)$ with $\exists x (g_1(x)\cdot ...\cdot g_n(x)\neq 0)$, since $\Z^{ab}/p\Z^{ab}$ is not an integral domain. Moreover, the ring $\Z^{ab}/p\Z^{ab}$ has lots of nilpotent elements, which arise due to high ramification and make its model theory difficult to understand. Nonetheless, the ring $\Z^{ab}/p\Z^{ab}$ resembles "enough" a valuation ring, so that we can eventually encode our problem inside $ACVF$\footnote{More precisely, inside $\bigcap_{p\in \mathbb{P}}ACVF_{(p,p)}$ (see notation), which is also well-understood. This is explained in the proof of Theorem \ref{mainagain}.}, which is well-understood by work of A. Robinson (see Sections 3.4, 3.5 \cite{vdd} for an exposition of A. Robinson's result).
\section*{Notation}
If $(K,v)$ is a valued field, we denote the valuation ring by
 $\mathcal{O}_v$, the value group by $vK$ and the residue field by $Kv$.\\
We also denote by:\\
$\Q^{ab}$: The maximal abelian extension of $\Q$, i.e. the maximal algebraic extension $K/\Q$ with $Gal(K/\Q)$ abelian.\\
$\Z^{ab}$: The ring of integers in $\Q^{ab}$, i.e. the integral closure of $\Z$ in $\Q^{ab}$.\\
%$\Z_p^{ur}$: The valuation ring of $(\Q_p^{ur},v_p)$, the maximal unramified extension of $\Q_p$ equipped with the unique extension of the $p$-adic valuation.\\
%$\Z_p^{ab}$: The valuation ring of $(\Q_p^{ab},v_p)$, the maximal Galois extension of $\Q_p$ whose Galois group over  $\Q_p$ is abelian.\\
$k((t))^{1/p^{\infty}}$: This is the perfect hull of $ k((t))$, not to be confused with the Hahn field with residue field $k$ and value group $\frac{1}{p^{\infty}}\Z$.\\
 $k[[t]]^{1/p^{\infty}}$: The valuation ring of $k((t))^{1/p^{\infty}}$ with respect to the $t$-adic valuation.\\
$L_r$: The language of rings, i.e. $\{+,-,\cdot,0,1\}$.\\
$L_{oag}$: The language of ordered abelian groups, i.e. $\{+,<,0\}$.\\
$L_{val}$: The language of valued fields, construed as a three-sorted language with sorts for the valued field, the value group, the residue field and symbols for the valuation, the residue map and the place of the valuation. \\
ACVF: The theory of algebraically closed valued fields in $L_{val}$ (see Section 3.4 \cite{vdd}).\\
ACVF$_{(q,p)}$: The theory of algebraically closed valued fields of characteristic $(char(K),char(Kv))=(q,p)$ in $L_{val}$ (see Section 3.5 \cite{vdd}).\\
When $T$ is a theory in the language $L$, we denote by $T_{\exists}$ the set of existential sentences $\phi \in L $ such that $T\models \phi$.
\section{Modulo $p$ computations}
In this section we fix a prime $p\in \mathbb{P}$ and start by analyzing the ring $\Z^{ab}/p\Z^{ab}$. By Kronecker-Weber, the ring $\Z^{ab}$ can be written as $\Z[\zeta_{\infty}]=\Z[\zeta_{\infty '},\zeta_{p^{\infty}}]$, where $\Z[\zeta_{\infty '}]=\Z[\{\zeta_n:(n,p)=1\}]$ and $\Z[\zeta_{p^{\infty}}]=\Z[\{\zeta_{p^n}:n\in \N\}]$. The plan will be to first analyze $R_p=\Z[\zeta_{\infty '}]/p\Z[\zeta_{\infty '}]$ and then understand $\Z^{ab}/p\Z^{ab}$ as an $R_p$-algebra. Eventually, we shall be able to convert existential sentences about $\Z^{ab}/p\Z^{ab}$ to existential sentences about a certain $\bar \F_p$-algebra (see Corollary \ref{corconv}).
\subsection{Preliminaries}
We collect here some preliminary facts which will make the exposition in the following sections easier.
\bl \label{ringiso}
Let $R$ be any ring, $m\in \N$ and $\kappa$ be a cardinal number. Then we have the following ring isomorphism $R^{\kappa}[x]/(x^m)\cong (R[x]/(x^m))^{\kappa}$.
\el 
\begin{proof}
Consider $f:R^{\kappa}[x]/(x^m) \to (R[x]/(x^m))^{\kappa}$ which sends 
$$ a_0+a_1x+...+a_{m-1}x^{m-1} +(x^m) \mapsto (a_0(i)+a_1(i)x+...+a_{m-1}(i) x^{m-1}+(x^m))_{i\in \kappa}$$
and $g:(R[x]/(x^m))^{\kappa}\to R^{\kappa}[x]/(x^m)$ mapping 
$$(a_0^{(i)}+a_1^{(i)}x+...+a_{m-1}^{(i)}x^{m-1}+(x^m) )_{i\in \kappa} \mapsto  a_0+a_1x+...+a_{m-1}x^{m-1}+(x^m)$$
where $a_j:\kappa\to R$ is defined by $a_j(i)=a_j^{(i)}$. It is clear that $f$ and $g$ are ring homomorphisms and inverses of each other. 
%$(b)$ The image of the ideal $(x^m)$ under the isomorphism $f$ of part $(a)$ is equal to the ideal $(x^m)\times (x^m)\times...\times (x^m)\subset R[x]\times R[x]\times ...\times R[x]$ and thus 
%$$ R^n[x]/(x^m) \cong R[x]\times...\times R[x]/(x^m)\times...\times (x^m) \cong (R[x]/x^m)^n$$
%which is what we wanted to show.
\end{proof}
\bl \label{DNF}
Every quantifier-free formula in $L_r$ is logically equivalent to one of the form $\bigvee_{i=1}^N \bigwedge_{j=1}^n  (f_{ij}(x)  \msquare 0)$, where $\msquare$ stands for $"="$ or $"\neq "$ and $f_{ij}(x)\in \Z[x]$ with $x=(x_1,...,x_m)$ and $m\in \N$.
\el 
\begin{proof}
This is a special case of the fact that every quantifier-free formula in a language $L$ is logically equivalent to one in disjunctive normal form (see pg.42 \cite{Hod}).
\end{proof}

While injective limits do not commute with infinite products (in general), we have the following:
\bl \label{directlim}
Let $\langle R_i, \phi_{ij} \rangle_{i,j\in I}$ be a direct system of rings over some directed set $\langle I,\leq \rangle$ and $R=\varinjlim R_i $. Suppose that $\phi_i:R_i\to R$ is an injective ring homomorphism for each $i\in I$. Let also $\kappa\geq \aleph_0$ be a cardinal number. Then we may form a new direct system $\langle (R_i)^{\kappa}, \phi_{ij}^{\kappa} \rangle_{i,j\in I}$ and we have that $R^{\kappa}\equiv_{\exists} \varinjlim (R_i)^{\kappa}$ in $L_r$.
\el 
\begin{proof} 
By Lemma \ref{DNF}, it suffices to check that $R^{\kappa}\models \phi \iff \varinjlim (R_i)^{\kappa}\models \phi$ for existential sentences $\phi \in L_r$ of the form 
$$\phi= \exists x( \bigwedge_{1\leq i\leq n} f_i(x)=0\land g_i(x)\neq 0)$$
where $x=(x_1,...,x_m)$, $f_i(x),g_i(x)\in \Z[x]$ for $i=1,...,n$. \\ 
Suppose that $R^{\kappa} \models \phi$. We will then have that $R^n\models \phi$, where the number $n$ is the same as in the definition of $\phi$. Since $R^{n}\cong \varinjlim(R_i)^{n}$, we will also have that $\varinjlim(R_i)^{n} \models \phi  $ and thus $\varinjlim (R_i)^{\kappa} \models \phi$. The other direction is clear since we have a natural ring embedding $\varinjlim (R_i)^{\kappa}\hookrightarrow R^{\kappa}$.
\end{proof}
Let $R$ be a ring. Two ideals $\mathfrak{a},\mathfrak{b}$ of $R$ are said to be relatively prime if $\mathfrak{a}+\mathfrak{b}=R$. Recall the following classical result:
\bt [Chinese Remainder Theorem] \label{CRT}
Let $\mathfrak{a}_1,...,\mathfrak{a}_n$ be ideals in a ring $R$, relatively prime in pairs. We then have a short exact sequence 
$$0\to \mathfrak{a}\to R \to \prod_{i=1}^n R/\mathfrak{a}_i \to 0$$
where $\mathfrak{a}=\bigcap_{i=1}^n \mathfrak{a}_i$.
\et 
\begin{proof}
See Theorem 17, pg. 265 \cite{DF}.
\end{proof}

\subsection{The unramified part} \label{unrampart}
Fix a prime number $p\in \mathbb{P}$. Let $\Z[\zeta_{\infty '}]=\Z[\{\zeta_n:(n,p)=1\}]$ and 
$$R_p=\Z[\zeta_{\infty'}]/p\Z[\zeta_{\infty'}]\cong \varinjlim\Z[\zeta_{p^n-1}]/p \Z[\zeta_{p^n-1}]$$
The limit is meant to be the direct limit of the \textit{injective}\footnote{One has to check that $p\Z[\zeta_{p^n-1}]=\Z[\zeta_{p^n-1}]\cap p\Z[\zeta_{p^m-1}]$, when $n|m$. Write $p\Z[\zeta_{p^n-1}]=\mathfrak{p}_1\cap...\cap \mathfrak{p}_r$ and $p\Z[\zeta_{p^m-1}]= \mathfrak{q}_1\cap...\cap \mathfrak{q}_l$, where the $\mathfrak{p}_i$'s (resp. $\mathfrak{q}_i$'s) are the prime ideals of $\Z[\zeta_{p^n-1}]$ (resp. $\Z[\zeta_{p^m-1}]$) lying above $(p)\subset \Z$. Moreover, each $\mathfrak{p}_i$ lies under some $\mathfrak{q}_j$, using that $ \Z[\zeta_{p^n-1}]\subset \Z[\zeta_{p^m-1}]$ is integral (Theorem 26 (2), pg.694 \cite{DF}). It follows that $p\Z[\zeta_{p^n-1}]=\Z[\zeta_{p^n-1}]\cap p\Z[\zeta_{p^m-1}]$. } system of rings
 $\langle  \Z[\zeta_{p^n-1}]/p\Z[\zeta_{p^n-1}],\phi_{nm} \rangle$ over the directed poset $\langle \N,| \rangle$ with
$$\phi_{nm}:\Z[\zeta_{p^n-1}]/p \Z[\zeta_{p^n-1}]\to \Z[\zeta_{p^m-1}]/p \Z[\zeta_{p^m-1}]$$
being induced by the natural inclusion $ \Z[\zeta_{p^n-1}]\hookrightarrow \Z[\zeta_{p^m-1}]$, when $n|m$.
\bl \label{modulopfinite}
Let $m\in \N$ and $q=p^m$. We have an isomorphism of rings
$$\Z[\zeta_{q-1}]/p\Z[\zeta_{q-1}]\cong \F_q\times...\times \F_q=\F_q^r$$
Moreover, we have that $r=\frac{\phi(q-1)}{m}$.
\el 
\begin{proof}
Since $p$ is unramified in $\Z[\zeta_{q-1}]$, its (unique) factorization into prime ideals has the following shape
$$(p)=\mathfrak{p}_1\cdot...\cdot \mathfrak{p}_r=\mathfrak{p}_1\cap \mathfrak{p}_2\cap...\cap \mathfrak{p}_r$$
where the $\mathfrak{p}_i$'s are distinct prime ideals in $\Z[\zeta_{q-1}]$, each with residue field $\F_q$. The conclusion thus follows from the Chinese Remainder Theorem (see Theorem \ref{CRT}). The fact that $r=\frac{\phi(q-1)}{m}$ follows from the fundamental equality (see Corollary on pg.20 \cite{Ser}) by the following computation
$$\phi(q-1)=[\Q(\zeta_{q-1}):\Q]=e_p(\Q(\zeta_{q-1})/\Q)\cdot f_p(\Q(\zeta_{q-1})/\Q) \cdot  r=[\F_q:\F_p]\cdot r=m\cdot r$$
\end{proof}

Let $\bar \F_p^{\omega}$ be the ring produced by taking the Cartesian product of $\omega$ copies of $\bar \F_p$.
\bp \label{rpexistential}
We have that $R_p \equiv_{\exists} \bar \F_p^{\omega}$ in the language of rings $L_r$.
\ep  
\begin{proof}
By Lemma \ref{DNF}, it suffices to show that $R_p\models \phi \iff  \bar \F_p^{\omega}\models \phi$, for sentences $\phi \in L_r$ of the form
$$\exists x( \bigwedge_{1\leq i\leq n} f_i(x)=0\land g_i(x)\neq 0)$$
$"\Rightarrow":$ Suppose that $R_p\models \phi$. Since 
$$R_p\cong \varinjlim \Z[\zeta_{p^n-1}]/p\Z[\zeta_{p^n-1}]$$
there exists $m\in \N$ such that $\Z[\zeta_{q-1}]/p\Z[\zeta_{q-1}]\models \phi$, where $q=p^m$.  By Lemma \ref{modulopfinite}, we see that 
$$\F_q\times...\times \F_q\models \phi \Rightarrow \bar \F_p^{\omega}\models \phi $$
since there is a ring embedding $\F_q\times...\times \F_q\hookrightarrow \bar \F_p^{\omega}$.\\
$"\Leftarrow":$ Suppose that $\bar \F_p^{\omega}\models \phi$. Then $\bar \F_p^n\models \phi$, where the $n$ is the same as in the definition of $\phi$. Consequently, there exists $m\in \N$  such that $\F_q^n\models \phi$, where $q=p^m$. Replacing $m$ with a large multiple, if necessary, we may also assume that $r=\frac{\phi(p^m-1)}{m}>n$.\footnote{Indeed, for any $\alpha\in \N^{>1}$ we have that
$$\lim_{N\rightarrow \infty} \frac{\phi(\alpha^{N}-1)}{N}=\infty$$
by using for example that $\phi(n)\geq \frac{\sqrt{n}}{2}$ for all $n\in \N$.}
It then follows from Lemma \ref{modulopfinite}, that 
$$\F_q^n \hookrightarrow \F_q^r\cong \Z[\zeta_{q-1}]/p\Z[\zeta_{q-1}]$$
and since there is a ring embedding $ \Z[\zeta_{q-1}]/p\Z[\zeta_{q-1}] \hookrightarrow R_p$, we get that $R_p\models \phi$.
\end{proof}
\begin{rem}
Note that $ R_p \not \cong \bar \F_p^{\omega}$ because $|R_p|=\aleph_0$ while $| \bar \F_p^{\omega}|=2^{\aleph_0}$.
\end{rem}

\subsection{Abelian algebraic integers modulo $p$}
Our goal in this section is to prove Proposition \ref{resringab}, which is an important step in order to understand $\Z^{ab}/p\Z^{ab}$ for a fixed prime $p\in \mathbb{P}$.\\
We shall first view $\Z^{ab}/p\Z^{ab}$ as an $R_p$-algebra, where $R_p$ is as in Section \ref{unrampart}.
\bl \label{resringrp}
Fix a prime number $p\in \mathbb{P}$ and let $R_p$ be as above. Then we have a ring isomorphism $\Z^{ab}/p\Z^{ab} \cong R_p[t^{1/p^{\infty}}]/t^{p-1}$, where the latter is by definition $\varinjlim R_p[t^{1/p^n}]/t^{p-1}$, with morphisms $\phi_{nm}:R_p[t^{1/p^n}]/t^{p-1}\to R_p[t^{1/p^m}]/t^{p-1}$ induced by the natural inclusion maps $R_p[t^{1/p^n}]\hookrightarrow R_p[t^{1/p^m}]$, for $n\leq m$.
\el 
\begin{proof}
By Kronecker-Weber (Theorem 14.1 \cite{Wash}) and the fact that the ring of integers of $\Q(\zeta_n)$ is $\Z[\zeta_n]$ (Theorem 2.6 \cite{Wash}), we have that $\Z^{ab}=\Z[\zeta_{\infty}]$, which is also equal to $\Z[\zeta_{\infty '},\zeta_{p^{\infty}}]$. Note that the irreducible polynomial of $\zeta_p$ over $\Q(\zeta_{\infty '})$ is the cyclotomic polynomial
$$\Phi_p(x)=x^{p-1}+...+1$$
To see this, note that when $p\nmid n$, we have $[\Q(\zeta_n,\zeta_p):\Q(\zeta_n)] \geq [\Q(\zeta_{np}):\Q(\zeta_n)]=\frac{\phi(np)}{\phi(n)}=\phi(p)=p-1$.\footnote{Recall that $[\Q(\zeta_n):\Q]=\phi(n)$, where $\phi$ is Euler's totient function (Theorem 2.5 \cite{Wash}). }. On the other hand, $[\Q(\zeta_n,\zeta_p):\Q(\zeta_n)] \leq [\Q(\zeta_p):\Q]=p-1$ and therefore $[\Q(\zeta_n,\zeta_p):\Q(\zeta_n)]=p-1$. It follows that the irreducible polynomial of $\zeta_p$ over $\Q(\zeta_{\infty'})$ must be of degree $p-1$ and is therefore $\Phi_p(x)$. Note also that its reduction modulo $p$ is equal to $\bar \Phi_p(x)=(x-1)^{p-1}$.\\
Moreover, given $n>1$, the irreducible polynomial of $\zeta_{p^n}$ over $\Z[\zeta_{\infty '},\zeta_{p^{n-1}}]$ is $x^p-\zeta_{p^{n-1}}$. To see this, note that 
$$p\geq deg(irr_{(\zeta_{p^n},\Q(\zeta_{\infty '}, \zeta_{p^{n-1}}))})\geq e(\Q(\zeta_{\infty '}, \zeta_{p^{n}}))/\Q(\zeta_{\infty '},\zeta_{p^{n-1}}))= $$
$$ [\frac{1}{p^n(p-1)}\Z:\frac{1}{p^{n-1}(p-1)}\Z]= p\Rightarrow irr_{(\zeta_{p^n},\Q(\zeta_{\infty '}, \zeta_{p^{n-1}}))}(x)=x^p-\zeta_{p^{n-1}}$$
We may now compute
$$\Z[\zeta_{\infty '}][\zeta_{p^{\infty}}]/(p)=\Z[\zeta_{\infty'}][x^{1/p^{\infty}}]/(p,\Phi_p(x))=R_p[x^{1/p^{\infty}}]/(\bar \Phi_p(x))=$$ 
$$R_p[x^{1/p^{\infty}}]/(x-1)^{p-1}\stackrel{x-1=t}{\cong} R_p[(t+1)^{1/p^{\infty}}]/t^{p-1}=R_p[t^{1/p^{\infty}}]/t^{p-1}$$
which is what we wanted to show.
\end{proof}
%The following key observation, which is a consequence of Kronecker-Weber, starts to reveal the valuative nature of the problem.

\bl \label{equivomega}
We have $\bar \F_p^{\omega}[t^{1/p^{\infty}}]/t^{p-1}\equiv_{\exists} (\bar \F_p[t^{1/p^{\infty}}]/t^{p-1})^{\omega}$ in the language of rings $L_r$.
\el 
\begin{proof}
Indeed, suppose $\phi$ is an existential sentence in $L_r$. Then we get the following series of equivalences
$$ \bar \F_p^{\omega}[t^{1/p^{\infty}}]/t^{p-1} \models \phi \iff (\exists e\in \N)( \bar \F_p^{\omega}[t^{1/p^{e}}]/t^{p-1} \models  \phi )\iff$$ 
$$ (\exists e\in \N)( \bar \F_p^{\omega}[t]/t^{p^e(p-1)} \models  \phi)$$
Applying Lemma \ref{ringiso} with $R=\bar \F_p$, $m=p^e(p-1)$ and $\kappa=\omega$, this is also equivalent to
$$(\exists e\in \N) ((\bar \F_p[t]/t^{p^e(p-1)})^{\omega} \models  \phi) \iff (\exists e\in \N) ((\bar \F_p[t^{1/p^{e}}]/t^{p-1})^{\omega}) \models \phi$$ 
By Lemma \ref{directlim}, we finally see that this is equivalent to $(\bar \F_p[t^{1/p^{\infty}}]/t^{p-1})^{\omega} \models \phi$, which finishes the proof.
\end{proof}

We may now prove:
\bp \label{resringab}
We have $\Z^{ab}/p\Z^{ab} \equiv_{\exists} (\bar \F_p[t^{1/p^{\infty}}]/t^{p-1})^{\omega}$ in the language of rings $L_r$.
\ep   
\begin{proof}
By Proposition \ref{resringrp}, we will have that $\Z^{ab}/p\Z^{ab} \cong R_p[t^{1/p^{\infty}}]/t^{p-1}$. We first show that $R_p[t^{1/p^{\infty}}]/t^{p-1}\equiv_{\exists} \bar \F_p^{\omega}[t^{1/p^{\infty}}]/t^{p-1}$. Let $\phi \in L_r$ be an existential sentence. Then
$$ R_p[t^{1/p^{\infty}}]/t^{p-1}\models \phi \iff (\exists m\in \N)(R_p[t^{1/p^{m}}]/t^{p-1} \models \phi )$$
$$\iff (\exists m\in \N)(R_p[t]/t^{p^m(p-1)} \models \phi )$$ 
Note that for any ring $R$ and $m\in \N$, the ring $R[t]/t^m$ is quantifier-free interpretable in $R$ and also \textit{uniformly} in $R$.\footnote{One has to identify $a_0+...+a_{m-1}t^{m-1}$ with $(a_0,...,a_{m-1})\in R^m$ and check that the ring operations of $R[t]/t^m$ are definable in $L_r$ via this translation and also  \textit{uniformly} in $R$. } This fact together with Proposition \ref{rpexistential} imply that

$$(\exists m\in \N)(R_p[t]/t^{p^m(p-1)} \models \phi )  \iff  (\exists m\in \N)(\bar \F_p^{\omega} [t]/t^{p^m(p-1)} \models \phi )$$
$$\iff \bar \F_p^{\omega}[t^{1/p^{\infty}}]/t^{p-1}\models \phi  $$
The conclusion follows from Lemma \ref{equivomega}.
\end{proof}
As a consequence, we have:
\bc \label{corconv}
Let $n\in \N$ and $f_i(x),g_i(x)\in \Z[x]$ be multi-variable polynomials in $x=(x_1,...,x_m)$, for $i=1,...,n$. We have that
$$\Z^{ab}/p\Z^{ab}\models  \exists x( \bigwedge_{1\leq i\leq n} f_i(x)=0\land g_i(x)\neq 0)$$
$$\iff  \bar \F_p[t^{1/p^{\infty}}]/t^{p-1}\models \exists x_1,...,x_n(\bigwedge_{1\leq i,j\leq n} f_i(x_j)=0\land g_i(x_i)\neq 0)$$
where each $x_j=(x_{j1},...,x_{jm})$ an $m$-tuple.
\ec
\begin{proof}
Immediate from Proposition \ref{resringab}.
\end{proof}

\section{From $\Z^{ab}$ to ACVF} \label{localapprox}
In the previous section we saw how $Th_{\exists}(\Z^{ab}/p\Z^{ab})$ reduces to $Th_{\exists}(  \bar \F_p[t^{1/p^{\infty}}]/t^{p-1})$ (see Corollary \ref{corconv}). We shall now encode the existential theory of $ \bar \F_p[t^{1/p^{\infty}}]/t^{p-1}$ in $L_r$ inside the existential theory of $\bar \F_p((t))^{1/p^{\infty}}$ in $L_{val}$. The latter is well understood by a result of Ansombe-Fehm (see Theorem \ref{AnsFehmequiv}) and is precisely $ACVF_{(p,p),\exists}$. 
\subsection{Positive characteristic valuation rings}
Fix a prime $p\in \mathbb{P}$. We convert existential statements about $\bar \F_p[t^{1/p^{\infty}}]/t^{p-1}$ in $L_r$ to existential statements about $\bar \F_p((t))^{1/p^{\infty}}$ in $L_{val}$ and also \textbf{uniformly} in $p$. The main idea originates in Section 4.1 \cite{KK} and the following Proposition is an easy adaptation of Proposition 4.1.1 \cite{KK}:
\bp \label{localglobal}
Fix $p\in \mathbb{P}$ and let $f_i(x),g_i(x)\in \bar \F_p[x]$ be multi-variable polynomials in $x=(x_1,...,x_m)$ for $i=1,...,n$. Then
$$ \bar \F_p[t^{1/p^{\infty}}]/t^{p-1}\models \exists x \bigwedge_{1\leq i\leq n} (f_i(x)=0\land g_i(x)\neq 0) \iff $$ 
$$\bar \F_p((t))^{1/p^{\infty}} \models  \exists x\in \mathcal{O}_v \bigwedge_{1\leq i,j\leq n}(v(f_i(x))> v(g_j(x)))$$
\ep  

\begin{proof} 
First observe that
$$\bar \F_p[[t]]^{1/p^{\infty}}/t^{p-1}\cong \varinjlim \bar \F_p[[t^{1/p^n}]]/t^{p-1}\cong \varinjlim \bar \F_p[t^{1/p^n}]/t^{p-1}\cong \bar \F_p[t^{1/p^{\infty}}]/t^{p-1} \ \ (\dagger)$$
"$\Rightarrow$": Let $a\in (\bar \F_p[t^{1/p^{\infty}}]/t^{p-1})^m$ be such that $f_i(a)=0\land g_i(a)\neq 0 $, for $i=1,...,n$ and let $\tilde{a}$ be any lift of $a$ in $\bar \F_p[[t]]^{1/p^{\infty}}$ via the isomorphism $(\dagger)$. We see that $v(f_i(\tilde{a}))\geq p-1 >v(g_j(\tilde{a}))$, for all $i,j=1,...,n$.\\
"$\Leftarrow$": Let $b\in (\bar \F_p[[t]]^{1/p^{\infty}})^m$ be such that $v(g_j(b))< v(f_i(b))$ for all $1\leq i,j\leq n$. Set $\gamma_1=max \{v(g_j(b)):j=1,...,n\}$ and $\gamma_2=min \{v(f_i(b)):i=1,...,n\}$ and consider the open interval $I=(\gamma_1,\gamma_2)\subseteq \frac{1}{p^{\infty}}\Z^{\geq 0}$. Since $\frac{1}{p^{\infty}}\Z$ is dense in $\mathbb{R}$, we can find $q\in \frac{1}{p^{\infty}}\Z$ such that $p-1\in qI$. We now make use of the fact that for each $q\in \frac{1}{p^{\infty}}\Z^{>0}$, there is an $\bar \F_p$-embedding 
$$\rho:  \bar \F_p((t))^{1/p^{\infty}}\to \bar \F_p((t))^{1/p^{\infty}}$$
which maps $t\mapsto t^q$. Indeed, if $q\in \frac{1}{p^{N}}\Z^{>0}$ for some $N\in \N$, then there exists an embedding $\rho:\bar \F_p((t))^{1/p^{N}}\to \bar \F_p((t))^{1/p^{N}}$ mapping $t\to t^q$, exactly as in Remark 7.9 \cite{AnscombeFehm}. Such a map can also be extended uniquely to the perfect hull $ \bar \F_p((t))^{1/p^{\infty}}$.\\
  %Applying Lemma \ref{densesubgp} for $\alpha=log_{q}p$, where $q$ is a prime different than $p$, we get $n,m\in \N$ such that 
%$$nlog_q p -m\in log_q I \iff p^n\in q^mI$$
%Consider the embedding 
%$$\rho: \bar \F_p[[t]]^{1/p^{\infty}}\to \bar \F_p[[t]]^{1/p^{\infty}}$$
%which maps 
%$$t\mapsto t^{q^m(p-1)} $$
Now let $\rho:  \bar \F_p((t))^{1/p^{\infty}}\to \bar \F_p((t))^{1/p^{\infty}}$ be as above. Then, since $f_i(x),g_j(x)\in \bar \F_p[x]$ and $\rho$ fixes $\bar \F_p$, we get
$$v(g_j(\rho(b)))=v(\rho(g_j(b)))=qv(g_j(b))<qv(f_i(b))=v(\rho(f_i(b)))=v(f_i(\rho(b)))$$
for all $1\leq i,j\leq n$. We may thus replace our witness $b$ with $a=\rho(b)$.\\ 
Since $p-1\in qI$, we get $f_i(a)=0\mod t^{p-1}\land g_i(a)\neq 0\mod t^{p-1} $, for all $i=1,...,n$. The reduction of $a$ modulo $t^{p-1}$, seen as a tuple in $ \bar \F_p[t^{1/p^{\infty}}]/t^{p-1}$ via $\dagger$, is the desired witness.
%Let $a$ be the $p^n$-th root of $\rho(b)$ in $\bar \F_p[[t]]^{1/p^{\infty}}$. We then have
%$$f(a)=0\mod t^{p-1}\land g(a)\neq 0 \mod t^{p-1} $$
\end{proof}

\begin{rem} \label{ACFrem}
 If we have a positive existential sentence, then one sees that
$$\bar \F_p[t^{1/p^{\infty}}]/t^{p-1}\models \exists x  \bigwedge_{1\leq i\leq n} f_i(x)=0 \iff ACF_p \models \exists x  \bigwedge_{1\leq i\leq n} f_i(x)=0 $$
and no reference to valuations is needed. A similar observation holds true for an existential sentence defined purely by inequations. On the other hand, we see that
$$\bar \F_p[t^{1/p^{\infty}}]/t^{p-1}\models \exists x \bigwedge_{1\leq i\leq n} (f_i(x)=0\land g_i(x)\neq 0)  $$
$$\not \Rightarrow ACF_p\models \exists x \bigwedge_{1\leq i\leq n} (f_i(x)=0\land g_i(x)\neq 0)$$
as the example $\exists x(x^p=1 \land x\neq 1)$ shows. 
\end{rem}

\subsection{Diophantine problems modulo a fixed prime}
We will need a special case of the following Theorem due to Ansombe-Fehm: 
\bt [Corollary 7.2 \cite{AnscombeFehm}] \label{AnsFehmequiv}
Let $(K, v)$ and $(L, w)$ be equicharacteristic henselian non-trivially
valued fields. Then $Th_{\exists}(K, v) = Th_{\exists}(L, w)$ in $L_{val}$ if and only if $Th_{\exists}(Kv) = Th_{\exists}(Lw)$ in $L_r$.
\et 
In that special case, there is a direct argument which was communicated to me by E. Hrushovski. It bears many similarities with the proof of Corollary 7.7 \cite{AnscombeFehm} but ultimately relies on A. Robinson's results on ACVF and not Kuhlmann's theory of tame fields. We refer to Sections 3.4 and 3.5 \cite{vdd} for background material on $ACVF$.
\bob \label{Udi}
We have that $Th_{\exists}(\bar \F_p((t))^{1/p^{\infty}},v_t)=ACVF_{(p,p),\exists}$ in the language of valued fields $L_{val}$. %Moreover, $ACVF_{(p,p),\exists}$ is the theory of characteristic $p$ non-trivially valued fields with algebraically closed residue field.
\eob 
\begin{proof}
Every finite extension of $(\bar \F_p((t)),v_t)$ will be a complete discrete valued field with residue field $\bar \F_p$ and thus (abstractly) isomorphic to $\bar \F_p((t))$ (see Theorem 2, pg.33 \cite{Ser}). Therefore, the valued field $(\overline{\F_p((t))},v_t)$ is a directed union of structures isomorphic to $(\bar \F_p((t)),v_t)$. It follows that 
$$Th_{\exists}(\bar \F_p((t)),v_t)=Th_{\exists}(\overline{\F_p((t))})=ACVF_{(p,p),\exists}$$   
Moreover, if $(\bar \F_p((t)),v_t)\subset (F,v)\subset (\overline{\F_p((t))},v_t)$ is any intermediate field, then we will also have that $Th_{\exists}(F,v)= ACVF_{(p,p),\exists}$, since by default $Th_{\exists}(\bar \F_p((t)),v_t)\subset Th_{\exists}(F,v)\subset Th_{\exists} (\overline{\F_p((t))},v_t)$ and as we argued above $Th_{\exists}(\bar \F_p((t)),v_t)=Th_{\exists}(\overline{\F_p((t))},v_t)$. In particular, we get that $Th_{\exists}( \bar \F_p((t))^{1/p^{\infty}},v_t)=ACVF_{(p,p),\exists}$, which is what we wanted to show.\\
%Given a valued field $(K,v)$ of characteristic $p$ with $k=Kv$ algebraically closed and non-trivial value group $\Gamma$, we may take a saturated elementary extension $(K^*,v^*)$. In particular, we will have that $(K^*,v^*)$ will be pseudo-complete and thus maximal. It is thus abstractly isomorphic to $k^*((\Gamma^*))$ and contains $k^*(( \bigcap_{n\in \N} n \Gamma^*))$, which is an algebraically closed valued subfield. In particular, we get that $K^*\models ACVF_{(p,p),\exists}$ and the conclusion follows.
\end{proof}

\bp \label{localglobalACVF}
Fix $p\in \mathbb{P}$ and let $f_i(x),g_i(x)\in \bar \F_p[x]$ be multi-variable polynomials in $x=(x_0,...,x_m)$ for $i=1,...,n$. Then
$$ \bar \F_p[t^{1/p^{\infty}}]/t^{p-1}\models \exists x \bigwedge_{1\leq i\leq n} (f_i(x)=0\land g_i(x)\neq 0) \iff $$ 
$$ACVF_{(p,p)} \models  \exists x\in \mathcal{O}_v \bigwedge_{1\leq i,j \leq n}(v(f_i(x))> v(g_j(x)))$$
\ep  
\begin{proof}
Immediate from Proposition \ref{localglobal} and Observation \ref{Udi}.
\end{proof}
Recall that the theory $ACVF_{(q,p)}$ is complete in $L_{val}$ by A. Robinson for each characteristic pair $(q,p)$ (see Section 3.5, Corollary 3.34 \cite{vdd}) and thus also decidable, since it is recursively axiomatized. 
\bc \label{resringabdec}
Fix a prime number $p\in \mathbb{P}$. The existential theory of $\Z^{ab}/p\Z^{ab}$ is decidable in $L_r$.
\ec 
\begin{proof}
By Corollary \ref{corconv}, it suffices to show that $ \bar \F_p[t^{1/p^{\infty}}]/t^{p-1} $ is existentially decidable in $L_r$. Once again, using Lemma \ref{DNF}, we may focus on the special case where $\phi$ is a conjunction of literals. The conclusion now follows either by using Proposition \ref{localglobal} and Theorem \ref{AnsFehmequiv} or by Proposition \ref{localglobalACVF} and the fact that $ACVF_{(p,p)}$ is (existentially) decidable in $L_{val}$ due to A. Robinson.
\end{proof}
In Question \ref{zabmodn} we formulate two unsolved variants.

\subsection{Diophantine problems modulo all primes}
It is not difficult now to put all the pieces together:
\begin{Theore} \label{mainagain}
The existential theory of $\{\Z^{ab}/p\Z^{ab}:p\in \mathbb{P}\}$ is decidable in $L_r$.
\end{Theore}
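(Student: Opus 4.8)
The plan is to reduce the statement, uniformly and effectively in $p$, to deciding a single existential $L_{val}$-sentence in each of the completions $ACVF_{(p,p)}$, and then to tame the dependence on $p$ by a compactness argument; the fixed-prime case is already Corollary~\ref{resringabdec}, so the new content is the uniformity together with the passage to \emph{all} primes at once. By Lemma~\ref{DNF} it suffices to decide, for a given existential $L_r$-sentence $\phi$ put in disjunctive normal form $\phi\equiv\bigvee_{i=1}^{N}\phi_i$ with each $\phi_i$ of the shape $\exists x\,\bigwedge_j\,(f_{ij}(x)=0\wedge g_{ij}(x)\neq 0)$ for suitable $f_{ij},g_{ij}\in\Z[x]$, whether $\Z^{ab}/p\Z^{ab}\models\phi$ for \emph{every} prime $p$. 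Applying Corollary~\ref{corconv} to each disjunct $\phi_i$ and then Proposition~\ref{localglobalACVF}, one produces from $\phi$ an existential $L_{val}$-sentence $\Phi$ — a disjunction of sentences of the form $\exists x\in\mathcal O_v\,\bigwedge\,v(f(x))>v(g(x))$ with integer coefficients — that is computed effectively from $\phi$ and, crucially, does \emph{not} depend on $p$, and such that for every prime $p$
$$\Z^{ab}/p\Z^{ab}\models\phi\quad\Longleftrightarrow\quad ACVF_{(p,p)}\models\Phi.$$
Thus the theorem reduces to deciding whether $\Phi\in\bigcap_{p\in\mathbb P}ACVF_{(p,p),\exists}$.

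Next one uses that each $ACVF_{(p,p)}$, and also $ACVF_{(0,0)}$, is recursively axiomatized and complete (A. Robinson, as recalled above), hence decidable. The key point is an effective transfer: there is $N_0=N_0(\Phi)\in\N$, computable from $\Phi$, with $ACVF_{(p,p)}\models\Phi\iff ACVF_{(0,0)}\models\Phi$ for all primes $p\geq N_0$. Indeed, by completeness exactly one of $\Phi,\neg\Phi$ is a consequence of $ACVF_{(0,0)}$; running proof searches for both in parallel terminates and exhibits a finite $\Sigma_0\subseteq ACVF_{(0,0)}$ proving the relevant sentence. Since $ACVF_{(0,0)}$ is obtained from the base theory $ACVF$ only by adjoining the axiom schemes ``$\mathrm{char}\neq q$'' (for the field and for the residue field), the finite set $\Sigma_0$ involves only finitely many such $q$; taking $N_0$ to exceed all of them gives $\Sigma_0\subseteq ACVF_{(p,p)}$ for every prime $p\geq N_0$ (there $\mathrm{char}=p\neq q$), whence $ACVF_{(p,p)}$ decides $\Phi$ exactly as $ACVF_{(0,0)}$ does. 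This is what ``$\bigcap_{p}ACVF_{(p,p)}$ is well understood'' amounts to here.

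The algorithm is then immediate: compute $\Phi$ from $\phi$; decide $ACVF_{(0,0)}\models\Phi$ and extract $N_0$; for each of the finitely many primes $p<N_0$ decide $ACVF_{(p,p)}\models\Phi$ directly; and report that $\phi$ holds modulo every prime precisely when $ACVF_{(0,0)}\models\Phi$ and $ACVF_{(p,p)}\models\Phi$ for all $p<N_0$. I expect no serious obstacle; the two things to be checked with care, though routine, are that the translation $\phi\mapsto\Phi$ furnished by Corollary~\ref{corconv} and Proposition~\ref{localglobalACVF} is genuinely effective and free of $p$, and that the ``$\mathrm{char}\neq q$'' axioms are the sole feature separating $ACVF_{(0,0)}$ from $ACVF_{(p,p)}$ for large $p$. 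The one subtle point is bookkeeping: the disjunction over the DNF components of $\phi$ must be absorbed into the \emph{single} existential sentence $\Phi$ over $ACVF_{(p,p)}$, rather than handled prime by prime, so that the outer quantifier ``for all $p$'' does not interfere with it.
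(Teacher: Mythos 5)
Your proposal is correct and takes essentially the same route as the paper: reduce via Corollary~\ref{corconv} and Proposition~\ref{localglobalACVF} to a $p$-independent existential $L_{val}$-sentence, decide it in $ACVF_{(0,0)}$, extract a computable bound $p_0$ by G\"odel completeness/compactness, and check the finitely many primes below the bound using decidability of each $ACVF_{(p,p)}$. Your explicit absorption of the DNF disjuncts into a single sentence $\Phi$ (so that the quantifier over $p$ is not interchanged with the disjunction) is a slightly more careful treatment of a point the paper handles tersely by restricting attention to conjunctions of literals, but the substance of the argument is identical.
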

\begin{proof}
By Lemma \ref{DNF}, we may focus our attention on sentences $\phi \in L_r$ of the form
$$\phi= \exists x( \bigwedge_{1\leq i\leq n} f_i(x)=0\land g_i(x)\neq 0)$$
where $x=(x_1,...,x_m)$, for some $m\in \N$. For every $p\in \mathbb{P}$, we have by Corollary \ref{corconv} and Proposition \ref{localglobalACVF} that $\Z^{ab}/p\Z^{ab}\models \phi\iff ACVF_{(p,p)}\models \phi^*$, where
$$\phi^*=\exists x_1,...,x_n \in \mathcal{O}_v [\bigwedge_{1\leq i,j,k \leq n}(v(f_i(x_j))> v(g_k(x_k)))]$$
Note that $\phi^*$ depends only on $\phi$ and \textit{not} on $p$. Now a standard argument may be used to check if $ACVF_{(p,p)}\models \phi^*$ for all $p\in \mathbb{P}$:\\ 
First check if $ACVF_{(0,0)} \models \phi^*$, which can be done recursively since $ACVF_{(0,0)}$ is decidable in $L_{val}$. If we get a positive answer, an application of G\"odel's completeness theorem gives $ACVF_{(p,p)}\models \phi^*$ for $p\geq p_0$, where $p_0\in \mathbb{P}$ is \textit{computable}. It suffices to check that $ACVF_{(p,p)}\models \phi^*$ for finitely many $p\leq p_0$, which is again possible since for each individual $p\in \mathbb{P}$ the theory $ACVF_{(p,p)}$ is decidable. If we get a negative answer, then by completeness of $ACVF_{(0,0)}$, we will have that $ACVF_{(0,0)} \models \lnot \phi^*$, whence $ACVF_{(p,p)}\models \lnot \phi^*$, for all but finitely many $p\in \mathbb{P}$.
\end{proof}
\begin{rem}
Using Remark \ref{ACFrem}, we see that $T=Th_{\exists^+}(\{\Z^{ab}/p\Z^{ab}:p\in \mathbb{P}\})$ simply reduces to $ACF$, i.e. 
$$T\models  \exists x( \bigwedge_{1\leq i\leq n} f_i(x)=0) \iff ACF\models \exists x( \bigwedge_{1\leq i\leq n} f_i(x)=0) $$
and no reference to valuations is needed. 
\end{rem}

\section{Open problems}
The following two questions are of central importance:
\begin{question}
$(a)$ Is $Th(\Z^{ab})$ undecidable in $L_r$?\\
$(b)$ Is $Th_{\exists}(\Z^{ab})$ undecidable in $L_r$?
\end{question}
For some comments on the \textit{conjectural} undecidability of $\Z^{ab}$, see the last bullet point on Section 6.3, pg.191 in \cite{JK}. If one wants to explore aspects of $\Z^{ab}$ which are more likely to be decidable, one may ask the following natural extensions of the results presented in this paper:

%A positive answer would naturally generalize Corollary \ref{resringabdec}. This might be hard and we do not even know if $\Z_p^{ab}/p\Z_p^{ab}\cong \bar \F_p[t^{1/p^{\infty}}]/t^{p-1}$ is decidable in $L_r$, which is arguably a simpler structure. In fact, one can show that $Th(\Z_p^{ab}/p\Z_p^{ab})$ encodes the existential problem of $\bar \F_p[[t]]^{1/p^{\infty}}$ in $L_t$, which to my knowledge is open. 
%\begin{rem}
%Also true when $K^{\flat}$ is abundant, i.e. when for all $y\in m_{v^{\flat}}$, there is 
%$$\rho: \varpi \mapsto y$$
%\end{rem}
\bq \label{zabmodn}
$(a)$ Is  $Th(\Z^{ab}/p\Z^{ab})$ decidable in $L_r$? How about $Th(\{\Z^{ab}/p\Z^{ab}:p\in \mathbb{P}\})$ in $L_r$? \\
$(b)$ Let $n\in \N$. Is $Th_{\exists}(\Z^{ab}/n\Z^{ab})$ decidable in $L_r$?
\eq 
By the Chinese Remainder Theorem \ref{CRT},  we see that Question \ref{zabmodn}$(b)$ is reduced to the case of $\Z^{ab}/p^n\Z^{ab}$ for $p\in \mathbb{P}$ and $n\in \N$.

\section*{Acknowledgements}
I am grateful to the anonymous referee for various local corrections and suggestions for improving the exposition.
\bibliographystyle{alpha}
\bibliography{references2}

\begin{thebibliography}{Was97}

\bibitem[AF16]{AnscombeFehm}
Sylvy Anscombe and Arno Fehm.
\newblock The existential theory of equicharacteristic henselian valued fields.
\newblock {\em Algebra \& Number Theory Volume 10, Number 3 (2016), 665-683.},
  2016.

\bibitem[Ax67]{AxFin}
James Ax.
\newblock Solving diophantine problems modulo every prime.
\newblock {\em Annals of Mathematics , Mar., 1967, Second Series, Vol. 85, No.
  2 (Mar., 1967), pp. 161-183}, 1967.

\bibitem[Ax68]{Ax}
James Ax.
\newblock The elementary theory of finite fields.
\newblock {\em Annals of Mathematics , Second Series, Vol. 88, No. 2 , pp.
  239-271}, 1968.

\bibitem[DF03]{DF}
David~S. Dummit and Richard~M. Foote.
\newblock {\em Abstract Algebra, 3rd Edition}.
\newblock John Wiley \& Sons, 2003.

\bibitem[Hod09]{Hod}
Wilfrid Hodges.
\newblock {\em Model Theory}.
\newblock Cambridge University Press, 2009.

\bibitem[Kar20]{KK}
Konstantinos Kartas.
\newblock Decidability via the tilting correspondence.
\newblock {\em preprint on arXiv \url{https://arxiv.org/abs/2001.04424}},
  (2020).

\bibitem[Koe12]{JK}
Jochen Koenigsmann.
\newblock {\em Undecidability in number theory}.
\newblock Chapter 5 in the book "Model Theory in Algebra, Analysis and
  Arithmetic", Cetraro, Italy, Lecture Notes in Mathematics, Springer, 2012.

\bibitem[Ser80]{Ser}
Jean-Pierre Serre.
\newblock {\em Local Fields}.
\newblock New York: Springer-Verlag, 1980.

\bibitem[vdD12]{vdd}
Lou van~den Dries.
\newblock {\em Lectures on the Model Theory of Valued Fields}.
\newblock Chapter 4 in the book "Model Theory in Algebra, Analysis and
  Arithmetic", Cetraro, Italy, Cetraro, Italy, Lecture Notes in Mathematics,
  Springer, 2012.

\bibitem[Was97]{Wash}
Lawrence~C. Washington.
\newblock {\em Introduction to Cyclotomic Fields}.
\newblock Springer Science \& Business Media, Graduate texts in Mathematics,
  1997.

\end{thebibliography}
\Addresses
\end{document}